 \theoremstyle{plain}
\newtheorem{theorem}{Theorem}[section]
\newtheorem{lemma}[theorem]{Lemma}
\theoremstyle{definition}
\theoremstyle{remark}
\newtheorem{remark}[theorem]{Remark}
\numberwithin{equation}{section}
\begin{document}


\title[Conformally bending a closed manifold]{On the conformal bending of a closed Riemannian manifold}


\author{Rirong Yuan }
\address{School of Mathematics, 
	South China University of Technology, 
	Guangzhou 510641, China}
\email{yuanrr@scut.edu.cn}


\date{}

\dedicatory{}

\begin{abstract}
	
In this paper, we bend a closed Riemannian manifold in the conformal class, through solving a fully nonlinear equation. As a result, we prove that each metric of quasi-negative Ricci curvature is conformal to a metric with negative Ricci curvature.
	
\end{abstract}

\maketitle


  
  \section{Introduction}

    \medskip
    
    In 1980s, Gao-Yau \cite{Gao1986Yau} proved that any closed three-manifold admits a metric with negative Ricci curvature. 
The existence of negatively Ricci curved metric was extended by Lohkamp \cite{Lohkamp-1} to higher dimensional closed manifolds.   
    
One natural question to ask is: 
 given a conformal class $[g]=\{e^{2u}g: u\in C^\infty(M)\}$, is there a metric with negative Ricci curvature.

   When $M$ is an open manifold, this problem has been studied by Lohkamp \cite{Lohkamp-2}, who proved that every Riemannian metric on an open manifold is conformal to a complete metric of negative Ricci curvature. A new proof was given by the author in \cite{yuan-PUE1}. 
   
  Nevertheless,   the problem is still open when $M$ is closed.
 In this paper, we answer the problem when the Ricci curvature of $g$
is quasi-negative.

   First we summarize some notations and notions.
Let $Ric_g$ be the Ricci curvature of Riemannian metric $g$.
 The $Ric_g$ is quasi-negative means that $Ric_g$ is nonpositive everywhere but strictly negative somewhere.  
 
 \begin{theorem}
 	\label{thm1-Ricci}
 	Let $(M,g)$ be a closed connected Riemannian manifold of dimension $n\geqslant 3$ with quasi-negative Ricci curvature. Then there is a unique smooth Riemannian metric $\tilde{g}\in [g]$ with  
 	\begin{equation}
 		\begin{aligned}    \det(\lambda(-{\tilde{g}^{-1}}Ric_{\tilde{g}}))=1. \nonumber
 			\end{aligned}
 	\end{equation}
 	\end{theorem}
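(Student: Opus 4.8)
The plan is to take the conformal factor as the unknown, writing $\tilde g=e^{2u}g$ with $u\in C^\infty(M)$, and to use the conformal transformation law
\[
-\Ric_{\tilde g}=-\Ric_g+(n-2)\bigl(\nabla^2u-du\otimes du\bigr)+\bigl(\Delta u+(n-2)|\nabla u|^2\bigr)g,
\]
all quantities on the right referring to $g$. The equation then reads
\[
\mathcal{G}[u]:=e^{-2nu}\,\det\!\bigl(g^{-1}(-\Ric_{\tilde g})\bigr)=1,
\]
sought among \emph{admissible} $u$, i.e.\ those with $-\Ric_{\tilde g}>0$, equivalently $\lambda(-\tilde g^{-1}\Ric_{\tilde g})$ in the open positive cone $\Gamma_n$. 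Since $n\geqslant 3$ the principal part of $-\Ric_{\tilde g}$ is $(n-2)\nabla^2u+(\Delta u)\,g$, whose contraction with the positive‑definite matrix $\partial\det/\partial A$ is strictly positive, so $\mathcal{G}$ is elliptic at admissible functions; rewriting the equation as $\det^{1/n}\!\bigl(g^{-1}(-\Ric_{\tilde g})\bigr)=e^{2u}$ and using concavity of $\det^{1/n}$ on $\Gamma_n$ makes the Evans--Krylov theory applicable. Two observations are used throughout: $-\Ric_{\tilde g}$ depends on $u$ only through $\nabla u,\nabla^2u$, so the only zeroth‑order dependence on $u$ in $\mathcal{G}$ is the prefactor $e^{-2nu}$; and replacing $g$ by $c^2g$ changes neither the problem nor the conclusion, so we may normalise $\max_M\det(\lambda(-g^{-1}\Ric_g))\leqslant1$, which makes $u\equiv0$ a (possibly degenerate) supersolution.

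I would settle \emph{uniqueness} first, since it does not use quasi‑negativity. If $\tilde g_i=e^{2u_i}g$ ($i=1,2$) are admissible solutions, put $w=u_1-u_2$ and let $x_0$ maximise $w$. There $\nabla u_1=\nabla u_2$, $\nabla^2u_1\leqslant\nabla^2u_2$ and $\Delta u_1\leqslant\Delta u_2$, so the transformation law and $n-2>0$ give $0<-\Ric_{\tilde g_1}(x_0)\leqslant-\Ric_{\tilde g_2}(x_0)$ as quadratic forms; monotonicity of the determinant on positive matrices together with the two equations yields
\[
1=e^{-2nu_1(x_0)}\det\!\bigl(g^{-1}(-\Ric_{\tilde g_1})\bigr)(x_0)\leqslant e^{-2nu_1(x_0)}\det\!\bigl(g^{-1}(-\Ric_{\tilde g_2})\bigr)(x_0)=e^{-2nw(x_0)},
\]
so $\max_M w\leqslant0$ and, by symmetry, $w\equiv0$.

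For \emph{existence} I would run a regularised continuity method. For $\varepsilon\in(0,1]$ consider the family of elliptic equations
\[
e^{-2nu}\,\det\!\bigl(-g^{-1}\Ric_{\tilde g}+\varepsilon I\bigr)=\psi_{\varepsilon,t},\qquad \psi_{\varepsilon,t}:=(1-t)\det\!\bigl(-g^{-1}\Ric_g+\varepsilon I\bigr)+t,
\]
for $t\in[0,1]$. Since $-\Ric_g+\varepsilon g\geqslant\varepsilon g>0$ by quasi‑negativity, $u\equiv0$ solves and is admissible at $t=0$. Openness of the set of solvable $t$ follows from the implicit function theorem: at a solution the linearisation is elliptic on $M$ with zeroth‑order coefficient $-2n\,\psi_{\varepsilon,t}<0$ (the sole zeroth‑order term coming from $e^{-2nu}$), hence an isomorphism $C^{2,\alpha}\to C^{\alpha}$. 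Closedness for fixed $\varepsilon$ reduces to a priori $C^{2,\alpha}$ estimates: a $C^0$ bound, then $C^1$ and $C^2$ bounds by the now‑standard arguments for concave fully nonlinear equations in $\Gamma_n$ (using admissibility, i.e.\ negativity of the Ricci term), then Evans--Krylov and Schauder. This produces smooth $u_\varepsilon$ with $\Ric_{\tilde g_\varepsilon}<\varepsilon g$; provided the estimates are \emph{uniform in $\varepsilon$}, a subsequence converges to an admissible solution of $\det(\lambda(-\tilde g^{-1}\Ric_{\tilde g}))=1$, whose Ricci curvature is $\leqslant0$ and in fact $<0$, because $\det(\lambda(-\tilde g^{-1}\Ric_{\tilde g}))=1>0$ forces the eigenvalues into $\Gamma_n$ --- which is also the corollary announced in the abstract.

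The hard part is the $C^0$ estimate uniform in $\varepsilon$, and specifically its \emph{lower} half. The upper bound is elementary: at a maximum $x_1$ of $u_\varepsilon$ one has $-\Ric_{\tilde g_\varepsilon}(x_1)\leqslant-\Ric_g(x_1)$, hence $\det(-g^{-1}\Ric_{\tilde g_\varepsilon}+\varepsilon I)(x_1)\leqslant C_1$ with $C_1$ depending only on $(M,g)$, and the equation gives $e^{-2nu_\varepsilon(x_1)}\geqslant1/C_1$. The naive maximum‑principle bound at a minimum point, however, degenerates as $\varepsilon\to0$ precisely on the set where $\Ric_g$ has a null direction, where $\det(\lambda(-g^{-1}\Ric_g))=0$. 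Overcoming this is where quasi‑negativity must be used honestly: on a ball $B_r(p)$ with $-\Ric_g\geqslant c_0\,g$ one has genuine ellipticity and definiteness uniformly in $\varepsilon$, and one must transport a definite lower bound for $u_\varepsilon$ from this region to all of $M$ --- e.g.\ by a barrier built from the ``good'' region, or by an Alexandrov--Bakelman--Pucci / integral estimate exploiting that $\det=\sigma_n$ is a Monge--Amp\`ere‑type operator. I expect this transport of the $C^0$ lower bound to be the technical core of the argument; once it is available, the $C^1$, $C^2$, Evans--Krylov and bootstrap steps, and the passage $\varepsilon\to0$, are comparatively routine.
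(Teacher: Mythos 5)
Your uniqueness argument and the general reduction (conformal law, ellipticity and concavity of $\det^{1/n}$ on $\Gamma_n$, Evans--Krylov, continuity method) are sound and broadly parallel to the paper. But the existence part has a genuine gap, and it sits exactly where you yourself locate ``the technical core'': the uniform $C^0$ lower bound (equivalently, a global admissible barrier/subsolution) when $\Ric_g$ is only quasi-negative, so that the background metric is merely weakly admissible and $\det(\lambda(-g^{-1}\Ric_g))$ vanishes on part of $M$. You gesture at ``a barrier built from the good region, or an Alexandrov--Bakelman--Pucci / integral estimate,'' but neither is carried out, and this is not a routine step: any candidate conformal factor $\underline u$ on a closed manifold must have critical points, and at a critical point the first-order terms $(\Delta \underline u)\,g$, $|\nabla \underline u|^2 g$, $d\underline u\otimes d\underline u$ in the conformal transformation all vanish, so the Hessian contribution must there be absorbed by $-\Ric_g$ itself, which is only possible where $\Ric_g$ is strictly negative. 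So a ``barrier from the good region'' cannot be produced by a local bump or a distance-type function without confronting this global obstruction.

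The paper's whole point is the device that resolves this (Theorem \ref{thm1-construction}): take a Morse function $w$, use the homogeneity (isotopy) lemma of differential topology to push \emph{all} critical points of $w$ into the ball $B_{r_0/2}(p_0)$ where $\lambda(g^{-1}A)\in\Gamma$ strictly, obtaining $v=w\circ h^{-1}\leqslant -1$ with $|\nabla v|^2\geqslant m_0>0$ outside $B_{r_0}(p_0)$; then $\underline u=e^{Nv}$ with $N\gg1$ makes $e^{2\underline u}g$ admissible, because outside the ball the gradient terms (which lie in $\Gamma$ by Lemma \ref{coro3-ingamma}) dominate the Hessian term, which carries a factor $1/N$, while inside the ball $Ne^{Nv}$ is exponentially small and strict admissibility of $A$ absorbs everything. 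This admissible metric is then the subsolution giving the $C^0$ bound by the maximum principle, after which uniform ellipticity and the interior estimates of Theorems \ref{thm-localestimates} and \ref{interior-2nd-2} make the continuity method close. Your proposal would be complete only if you supplied an argument of comparable force for the $C^0$ lower bound uniform in your regularisation parameter $\varepsilon$; as written, that step is assumed rather than proved, and it is precisely the step where quasi-negativity (strictness at a single point) has to be converted into a global structure, which the paper does by Morse theory rather than by ABP or integral estimates.
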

 
			

  In fact we prove a more general theorem for a modified Schouten tensor 
  \begin{equation}
  	\begin{aligned}
  		\,& A_{{g}}^{\tau,\alpha}=\frac{\alpha}{n-2} \left({Ric}_{g}-\frac{\tau}{2(n-1)}   {R}_{g}\cdot {g}\right), \,& \alpha=\pm1,  \mbox{  }\tau \in \mathbb{R},  \nonumber
  	\end{aligned}
  \end{equation}  
where ${R}_g$ is the scalar curvature of $g$.
 More precisely, 
we can construct a metric   $\tilde{g}=e^{2{u}}g$ satisfying $\lambda(g^{-1}A_{\tilde{g}}^{\tau,\alpha})\in\Gamma$ in $M$,
  through solving a fully nonlinear equation 
\begin{equation}
	\label{main-equ1}
	\begin{aligned}
		f(\lambda(\tilde{g}^{-1}A_{\tilde{g}}^{\tau,\alpha}))=\psi,
	\end{aligned}
\end{equation}
where 
$\lambda(\tilde{g}^{-1}A_{\tilde{g}}^{\tau,\alpha})$ are the eigenvalues of $A_{\tilde{g}}^{\tau,\alpha}$ with respect to $\tilde{g}$, $0<\psi\in C^\infty(M)$ and 
  \begin{equation}
	\label{tau-alpha}
	\begin{cases}
		\tau<1 \,&\mbox{ if } \alpha=-1, \\
		\tau>1+(n-2)(1-\kappa_\Gamma\vartheta_{\Gamma}) \,&\mbox{ if } \alpha=1.
	\end{cases}
\end{equation}  
Here   $\kappa_\Gamma$ and $\vartheta_\Gamma$ are the constants from 
Theorem \ref{yuan-k+1} below. 

As in
\cite{CNS3}, $f$ is a \textit{smooth, symmetric} and \textit{concave} function defined in an \textit{open, symmetric} and \textit{convex} cone $\Gamma\subset\mathbb{R}^n$ with  vertex at the origin and boundary $\partial \Gamma\neq \emptyset$,  containing the positive cone $\Gamma_n:=\left\{\lambda \in \mathbb{R}^n: \mbox{ each component } \lambda_i>0\right\}.$ 
We denote the closure of $\Gamma$ by $\overline{\Gamma}=\Gamma\cup\partial \Gamma$. 

Our main result can be stated as follows. 
\begin{theorem}
	\label{thm2-metric}
	Let $(\alpha,\tau)$ satisfy \eqref{tau-alpha}. 
	 Let $M$ be a closed connected   manifold of dimension $n\geqslant3$ and suppose a Riemannian metric $g$ with
	 \begin{equation}
	 	\label{assump1-metric}
	 	\begin{aligned}
	 		\lambda(g^{-1}A_{g}^{\tau,\alpha}) \in \overline{\Gamma} \mbox{ in } M,
	 	\end{aligned}
	 \end{equation}
	 \begin{equation}
	 	\label{assump2-metric}
	 	\begin{aligned}
	 		\lambda(g^{-1}A_{g}^{\tau,\alpha}) \in \Gamma \mbox{ at some } p_0\in M.
	 	\end{aligned}
	 \end{equation}
Assume in addition that
 \begin{equation}
 	\label{homogeneous-1-mu}
 	\begin{aligned}
 		f(t\lambda)=t^{\mathrm{\varsigma}} f(\lambda), \mbox{ } \forall \lambda\in\Gamma, \mbox{} \forall t>0,
 		\mbox{  for some constant } 0<\mathrm{\varsigma} \leqslant1,
 	\end{aligned}
 \end{equation}
 \begin{equation}
 	\label{homogeneous-1-buchong2}
 	\begin{aligned} 
 		f >0 \mbox{ in } \Gamma, \quad f =0 \mbox{ on } \partial \Gamma.
 	\end{aligned}
 \end{equation} 
Then there is a unique smooth admissible metric   $\tilde{g}\in [g]$ satisfying
\eqref{main-equ1}.
\end{theorem}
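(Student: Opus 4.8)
The plan is to convert \eqref{main-equ1} into a fully nonlinear elliptic equation for the conformal factor and to solve it by the continuity method, after a preliminary conformal deformation that upgrades the degenerate admissibility \eqref{assump1-metric}--\eqref{assump2-metric} into strict admissibility on all of $M$. Writing $\tilde g=e^{2u}g$, the conformal transformation law for the modified Schouten tensor expresses $A_{\tilde g}^{\tau,\alpha}$ as $A_g^{\tau,\alpha}+\cT[u]$, where $\cT[u]$ is linear in $\nabla^2u$ and quadratic in $\nabla u$, of the schematic form
\begin{equation}
	\begin{aligned}
		\cT[u]=-\alpha\bigl(\nabla^2u-du\otimes du\bigr)+c_1(\tau,\alpha,n)\,(\Delta u)\,g+c_2(\tau,\alpha,n)\,|\nabla u|^2g; \nonumber
	\end{aligned}
\end{equation}
combined with the homogeneity \eqref{homogeneous-1-mu}, equation \eqref{main-equ1} becomes
\begin{equation}
	\begin{aligned}
		f\bigl(\lambda\bigl(g^{-1}(A_g^{\tau,\alpha}+\cT[u])\bigr)\bigr)=e^{2\varsigma u}\psi,\qquad \lambda\bigl(g^{-1}(A_g^{\tau,\alpha}+\cT[u])\bigr)\in\Gamma. \nonumber
	\end{aligned}
\end{equation}
Two structural features drive everything. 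First, the zeroth order term $e^{2\varsigma u}\psi$ is strictly increasing in $u$, the favorable sign for the maximum principle and for uniqueness. Second, \eqref{tau-alpha} is exactly the range of $(\alpha,\tau)$ in which the linearized operator is elliptic and the lower order terms are well signed: for $\alpha=-1$ the principal part is $\sum_{i,j}f^{ij}\nabla_{ij}(\cdot)$, automatically elliptic, while for $\alpha=1$ the bare term $-\sum_{i,j}f^{ij}\nabla_{ij}(\cdot)$ has the wrong sign and ellipticity is rescued only by the $c_1(\Delta u)\,g$ contribution, which forces $c_1>0$ to be large relative to $\max_i f^{ii}\big/\sum_k f^{kk}$ -- and this is precisely the bound in \eqref{tau-alpha} expressed through $\kappa_\Gamma$ and $\vartheta_\Gamma$ of Theorem \ref{yuan-k+1}.

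The first step is a preliminary lemma: under \eqref{assump1-metric}, \eqref{assump2-metric} and \eqref{tau-alpha} there exists $g_0=e^{2u_0}g\in[g]$ with $\lambda(g_0^{-1}A_{g_0}^{\tau,\alpha})\in\Gamma$ on all of $M$. This is the natural generalization of the classical fact that a metric of quasi-negative scalar curvature is conformal to one of strictly negative scalar curvature -- where the conformal factor is the first eigenfunction of the conformal Laplacian, whose first eigenvalue is negative once the scalar curvature is nonpositive everywhere and negative somewhere. One propagates the strict admissibility known near $p_0$ over the whole of $M$ by a suitably small, carefully chosen conformal factor $u_0$; the sign restriction \eqref{tau-alpha} is exactly what guarantees that the correction $\cT[u_0]$ pushes $\lambda(g^{-1}A_g^{\tau,\alpha})$ into $\Gamma$ rather than out of it. I expect this to be one of the two crux points.

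Granting $g_0$, I reparametrize over the background $g_0$, writing the unknown metric as $\tilde g=e^{2u}g_0$, and run the continuity method along
\begin{equation}
	\begin{aligned}
		f\bigl(\lambda(\tilde g_t^{-1}A_{\tilde g_t}^{\tau,\alpha})\bigr)=\psi_t,\qquad \psi_t:=t\,\psi+(1-t)\,f\bigl(\lambda(g_0^{-1}A_{g_0}^{\tau,\alpha})\bigr), \nonumber
	\end{aligned}
\end{equation}
with $\psi_t>0$ because $f(\lambda(g_0^{-1}A_{g_0}^{\tau,\alpha}))>0$ by the preliminary lemma, and with $t=0$ solved by $\tilde g_0=g_0$, i.e. $u\equiv0$. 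Openness at a solution is the implicit function theorem, since the linearization is second order elliptic (by the discussion above) with zeroth order coefficient $-2\varsigma e^{2\varsigma u}\psi<0$, hence invertible on $C^{2,\beta}$. Closedness reduces to a priori estimates for admissible solutions. For $C^0$: at an interior maximum of $u$, admissibility $\lambda\in\Gamma\subseteq\Gamma_1$ together with the sign of $c_1$ from \eqref{tau-alpha}, concavity and monotonicity of $f$ bound the eigenvalues from above, so $e^{2\varsigma u}\psi\le C$; at an interior minimum the analogous argument together with the strict positivity $\min_M f(\lambda(g_0^{-1}A_{g_0}^{\tau,\alpha}))>0$ -- exactly what the preliminary lemma buys -- gives $e^{2\varsigma u}\psi\ge c>0$; hence a two-sided bound on $u$. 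The gradient estimate is the standard Bernstein-type maximum principle argument applied to $|\nabla u|^2e^{\phi(u)}$, once more using the sign in \eqref{tau-alpha} for the quadratic gradient terms.

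The global second order estimate is the main obstacle. One differentiates \eqref{main-equ1} twice, applies the maximum principle to a test quantity such as $\bigl(\Delta_{g_0}u+C_0\bigr)e^{\phi(u)}$ (or the logarithm of the largest eigenvalue of $\nabla^2u$), uses concavity of $f$ to discard the third-derivative terms with the correct sign, and must absorb the commutator and curvature terms produced by the presence of $\Ric_{g_0}$ and $R_{g_0}$ in $A^{\tau,\alpha}$; it is here that $\kappa_\Gamma$ and $\vartheta_\Gamma$ are used, to control $\sum_i f^{ii}$ against $\sum_i f^{ii}\lambda_i$ so that these terms can be dominated, again crucially using \eqref{tau-alpha}. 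Once $\|u\|_{C^2}$ is bounded, \eqref{homogeneous-1-buchong2} together with the $C^0$ lower bound confines $\lambda(\tilde g^{-1}A_{\tilde g}^{\tau,\alpha})$ to a fixed compact subset of $\Gamma$, so the equation is uniformly elliptic and (being concave in the Hessian) Evans--Krylov yields a $C^{2,\beta}$ estimate, and Schauder bootstrapping yields $C^\infty$. This closes the continuity method and produces a smooth admissible $\tilde g\in[g_0]=[g]$ solving \eqref{main-equ1}. Uniqueness is the comparison principle: if $e^{2u_1}g$ and $e^{2u_2}g$ both solve \eqref{main-equ1}, then at a maximum point of $u_1-u_2$ the monotonicity of $f$ and the strict monotonicity of $u\mapsto e^{2\varsigma u}\psi$ force $u_1\le u_2$, and by symmetry $u_1\equiv u_2$.
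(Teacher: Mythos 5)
Your overall scaffolding---rewriting \eqref{main-equ1} as a fully nonlinear elliptic equation for the conformal factor, a continuity method, $C^0$/$C^2$ a priori estimates, Evans--Krylov plus Schauder, and comparison-principle uniqueness---matches the paper's outline, and your ellipticity heuristic for $\alpha=1$ is in substance the partial uniform ellipticity imported from Theorems \ref{yuan-k+1} and \ref{thm-localestimates}. The genuine gap is the step you yourself flag as a crux and then do not prove: passing from the weak admissibility \eqref{assump1-metric}--\eqref{assump2-metric} to a strictly admissible metric $g_0\in[g]$. Your justification is an analogy with the first eigenfunction of the conformal Laplacian in the quasi-negative scalar curvature case, but that argument has no counterpart for a general pair $(f,\Gamma)$: there is no linear eigenvalue problem whose sign detects whether the whole eigenvalue vector $\lambda(g^{-1}A^{\tau,\alpha}_g)$ can be pushed into an arbitrary cone $\Gamma$, and ``a suitably small, carefully chosen $u_0$'' is not a construction---a small conformal factor produces a small correction $\mathcal{T}[u_0]$, which cannot by itself move $\lambda$ off $\partial\Gamma$ at points far from $p_0$ unless a definite mechanism is exhibited.

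That mechanism is exactly where the paper works hardest (Theorem \ref{thm1-construction}): one takes a Morse function, uses the homogeneity lemma to isotope all of its critical points into the ball $B_{r_0/2}(p_0)$ where strict admissibility holds, sets $v\leqslant-1$ accordingly and $\underline{u}=e^{Nv}$ with $N\gg1$; outside the ball $|\nabla v|^2\geqslant m_0>0$, so the gradient contribution, whose eigenvalue vector is $|\nabla v|^2(1,\cdots,1,1-\varrho)$ up to a term of size $e^{Nv}$, lies in $\Gamma$ by Lemma \ref{coro3-ingamma} (this is precisely where \eqref{tau-alpha} enters, through $\kappa_\Gamma$ and $\vartheta_\Gamma$) and dominates the Hessian terms, which are smaller by a factor $1/(N|\nabla v|^2)$; inside the ball the strictness of $A$ absorbs the perturbation. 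Nothing in your proposal supplies this (Morse-theoretic or any other) mechanism, and since your $C^0$ estimate and the starting point $t=0$ of your continuity path both rely on the strictly admissible $g_0$, the argument as written does not close.
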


\begin{remark}
	For the equation \eqref{main-equ1}, we call $g$ an \textit{admissible} metric if   $\lambda(g^{-1}A_{{g}}^{\tau,\alpha})\in\Gamma$ in $M$. Meanwhile, we say $ {g}$ is \textit{weakly admissible} if $ {g}\in C^2$ and $\lambda( {g}^{-1}A_{ {g}}^{\tau,\alpha}) \in \overline{\Gamma} \mbox{ in } M$.
\end{remark}
 
  The equation  \eqref{main-equ1}  
 include many important equations as special cases.
 When $f=\sigma_1$, $\tau=0$ and 
 $\psi$ is a proper constant, 
 it is closely related to the well-known Yamabe problem, proved by Schoen \cite{Schoen1984}
 with important contributions from 
 Aubin \cite{Aubin1976} and Trudinger \cite{Trudinger1968}. 
 For $f=\sigma_k^{1/k}$, $\psi=1$ and $\tau=\alpha=1$, it was proposed by Viaclovsky \cite{Viaclovsky2000},  referred to $k$-Yamabe problem. The $k$-Yamabe problem 
 has been much studied in recent years   \cite{ChangGurskyYang2002,Ge2006Wang,Guan2003Wang-CrelleJ,ABLi2003YYLi,Gursky2007Viaclovsky,ShengTrudingerWang2007}.  
 
 We shall mention more  known work \cite{Gursky2003Viaclovsky,Sheng2006Zhang} on prescribed $\sigma_k$ curvature equation
   \begin{equation}
 	\label{main-equ-sigmak}
 	\begin{aligned}
 		\sigma_k^{1/k}(\lambda(\tilde{g}^{-1}A_{\tilde{g}}^{\tau,\alpha}))=\psi
 	\end{aligned}
 \end{equation} 
 on a closed manifold, in which 
 \begin{equation}
 	\label{tau-alpha-2}
 	\begin{cases}
 		\tau<1 \,& \mbox{ if } \alpha=-1,\\
 		\tau>n-1 \,& \mbox{ if } \alpha=1.
 	\end{cases}
 \end{equation} 
In  \cite{Gursky2003Viaclovsky} Gursky-Viaclovsky solved the equation for $\alpha=-1$ and $\tau<1$, 
while  the case $\tau>n-1$ and $\alpha=1$ was considered by  Sheng-Zhang \cite{Sheng2006Zhang}  via a fully nonlinear flow. 
Notice that their approach 
 relies crucially  upon two assumptions: one is the existence of admissible metric, the other  is  the assumption \eqref{tau-alpha-2}. This assumption automatically ensures the uniform  ellipticity (parabolicity) of the equation.

As a contrast our result and strategy are different. 
We want to stress that
 in Theorem \ref{thm2-metric} the  given metric allows to be weakly admissible. 
 To achieve this, we
 construct in Section \ref{sec16} an admissible metric, based on Morse theory and differential topology. 
   In addition, the assumption \eqref{tau-alpha} is much more broader than \eqref{tau-alpha-2} and it allows the critical case $\tau=n-1$ (the Einstein tensor $G_g=Ric_{{g}}-\frac{R_{{g}}}{2}\cdot{g}$) when $\Gamma\neq\Gamma_n$. 
   To overcome the difficulty, we use Theorem \ref{yuan-k+1} to explore the structure of \eqref{main-equ1}.
  The critical case is fairly interesting in dimension three, since the Einstein tensor is closely related to sectional curvature.  See a formula in \cite[Section 2]{Gursky-Streets-Warren2010}. 



\medskip

  \section{Preliminaries and Notations}
  \label{Sect-preliminaries}
  
  \subsection{Some formulas and reduction of equation}
  
  By the formula under the conformal change $\tilde{g}=e^{2u}g$
  (see e.g. \cite{Besse1987}),   
  \begin{equation}
  	\begin{aligned}
  		{Ric}_{\tilde{g}}=\,& {Ric}_g -\Delta u g -(n-2)\nabla^2u-(n-2)|\nabla u|^2g +(n-2)du\otimes du. \nonumber
  	\end{aligned}
  \end{equation}
  Thus
  \begin{equation}
  	\label{conformal-formula1}
  	\begin{aligned}
  		A_{\tilde{g}}^{\tau,\alpha}
  		= A_{g}^{\tau,\alpha}
  		+\frac{\alpha(\tau-1)}{n-2}\Delta u g-\alpha  \nabla^2 u
  		+\frac{\alpha(\tau-2)}{2}|\nabla u|^2 g
  		+\alpha  du\otimes du. 
  	\end{aligned}
  \end{equation}
  
  We denote
  \begin{equation}
  	\label{beta-gamma-A2}
  	\begin{aligned}
  		V[u]=\Delta u g -\varrho\nabla^2 u+\gamma |\nabla u|^2 g +\varrho du\otimes du+A,   
  	\end{aligned}
  \end{equation} 
  \begin{equation}
  	\label{beta-gamma-A3}
  	\begin{aligned}
  		\varrho=\frac{n-2}{\tau-1}, \mbox{ }
  		\gamma=\frac{(\tau-2)(n-2)}{2(\tau-1)}, \mbox{ } 
  		A=\frac{n-2}{\alpha(\tau-1)} A_{g}^{\tau,\alpha}. 
  	\end{aligned}
  \end{equation}
  Notice  that $V[u]=\frac{n-2}{\alpha(\tau-1)} A^{\tau,\alpha}_{\tilde{g}}$.
  The equation \eqref{main-equ1}  
  reads as follows
  \begin{equation}
  	\label{mainequ-02-2-1}
  	\begin{aligned}
  		f(\lambda(g^{-1}V[u]))	=\frac{(n-2)^{\mathrm{\varsigma}}\psi}{\alpha^\mathrm{\varsigma}(\tau-1)^\mathrm{\varsigma}} e^{2 \mathrm{\varsigma} u}. 
  	\end{aligned}
  \end{equation}
  
  \subsection{Structure of operators}

In prequel \cite{yuan-PUE1} the author explored the structure of nonlinear operator subject to 
  \begin{equation}
	\label{addistruc}
	\begin{aligned}
		\lim_{t\rightarrow +\infty}f(t\lambda)>f(\mu) \mbox{ for any } \lambda, \mbox{ }\mu\in \Gamma.
	\end{aligned}
\end{equation}
We denote $f_{i}(\lambda)= 
\frac{\partial f}{\partial \lambda_{i}}(\lambda).$ 
 \begin{theorem}[{\cite{yuan-PUE1}}]
	\label{yuan-k+1}
	Suppose  $(f,\Gamma)$ satisfies   \eqref{addistruc}.
	Then  \begin{equation}
		\label{elliptic-weak}
		\begin{aligned}
			f_{i}(\lambda) \geqslant 0  
			\mbox{ in } \Gamma, 	\mbox{  } \forall 1\leqslant i\leqslant n, \nonumber
		\end{aligned}
	\end{equation}  
and
	for any $ \lambda\in \Gamma$ 
	with $\lambda_1 \leqslant \cdots \leqslant\lambda_n$,
	\begin{equation}
		\begin{aligned}
			f_{{i}}(\lambda) \geqslant   \vartheta_{\Gamma} \sum_{j=1}^{n}f_j(\lambda)>0, \mbox{  } \forall 1\leqslant i\leqslant \kappa_\Gamma+1, \nonumber 
		\end{aligned}
	\end{equation}
where ${\kappa}_{\Gamma}=\max\left\{k: ({\overbrace{0,\cdots,0}^{k}},{\overbrace{1,\cdots, 1}^{n-k}})\in \Gamma \right\}$, 
as well as
$\vartheta_\Gamma=\frac{1}{n}$ for   $\Gamma=\Gamma_n$, and
\begin{equation}
	\label{theta-gamma}
	\vartheta_\Gamma= 
	\sup_{(-\alpha_1,\cdots,-\alpha_{\kappa_\Gamma}, \alpha_{\kappa_\Gamma+1},\cdots, \alpha_n)\in \Gamma;\mbox{ } \alpha_i>0}\frac{\alpha_1/n}{\sum_{i=\kappa_\Gamma+1}^n \alpha_i-\sum_{i=2}^{\kappa_\Gamma}\alpha_i} \mbox{ for } \Gamma\neq\Gamma_n.  \nonumber
\end{equation} 
\end{theorem}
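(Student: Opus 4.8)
The plan is to deduce everything from one structural fact — that \eqref{addistruc} forces the gradient $Df(\lambda):=(f_1(\lambda),\dots,f_n(\lambda))$ into $\Gamma^*\setminus\{0\}$, where $\Gamma^*:=\{v\in\mathbb{R}^n:\langle v,\mu\rangle\ge0\ \forall\,\mu\in\Gamma\}$ is the (closed) dual cone — and then to run an elementary rearrangement estimate on vectors of $\Gamma^*$. I will freely use the classical fact that symmetry and concavity of $f$ give $f_1(\lambda)\ge\cdots\ge f_n(\lambda)$ whenever $\lambda_1\le\cdots\le\lambda_n$: for $i<j$ the function $t\mapsto f(\lambda+t(e_i-e_j))$ is concave along the segment from $\lambda$ to its $(ij)$-transpose and, by symmetry of $f$, symmetric about $t=(\lambda_j-\lambda_i)/2\ge0$, so its derivative $f_i(\lambda)-f_j(\lambda)$ at $t=0$ is $\ge0$; I also use that $\Gamma\supset\Gamma_n$ is open, symmetric and a cone.

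\emph{Step 1: $Df(\lambda)\in\Gamma^*\setminus\{0\}$, hence $f_i\ge0$ and $\sum_j f_j>0$.} For $\lambda,\nu\in\Gamma$ and $s>0$, concavity gives
\[
f(s\nu)\ \le\ f(\lambda)+\langle Df(\lambda),\,s\nu-\lambda\rangle\ =\ \big(f(\lambda)-\langle Df(\lambda),\lambda\rangle\big)+s\,\langle Df(\lambda),\nu\rangle .
\]
By \eqref{addistruc} (both variables equal to $\nu$) the limit $\lim_{s\to\infty}f(s\nu)$ exceeds the finite number $f(\nu)$, so it is not $-\infty$; hence the coefficient of $s$ cannot be negative, i.e.\ $\langle Df(\lambda),\nu\rangle\ge0$. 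As $\nu\in\Gamma$ was arbitrary, $Df(\lambda)\in\Gamma^*$; taking $\nu=e_i+\epsilon(1,\dots,1)\in\Gamma_n\subset\Gamma$ and letting $\epsilon\downarrow0$ gives $f_i(\lambda)\ge0$. If $Df(\lambda_0)=0$ at some $\lambda_0\in\Gamma$, then by concavity $\lambda_0$ is a global maximum of $f$ on $\Gamma$, so $\sup_{s>0}f(s\lambda_0)\le f(\lambda_0)$, contradicting \eqref{addistruc}; thus $Df(\lambda)\ne0$, and since a nonzero element of $\Gamma^*$ pairs strictly positively with the interior point $(1,\dots,1)$ of $\Gamma$, we get $\sum_j f_j(\lambda)=\langle Df(\lambda),(1,\dots,1)\rangle>0$.

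\emph{Step 2: the quantitative bound.} Fix $\lambda\in\Gamma$ with $\lambda_1\le\cdots\le\lambda_n$; then $v:=Df(\lambda)$ has $v_1\ge\cdots\ge v_n\ge0$, $v\ne0$, $v\in\Gamma^*$, and, since $f_1\ge\cdots\ge f_n$, it suffices to prove $v_{\kappa_\Gamma+1}\ge\vartheta_\Gamma\sum_j v_j$. When $\kappa_\Gamma=0$ (in particular for $\Gamma=\Gamma_n$) this is immediate: $v_1\ge\frac1n\sum_j v_j$ because the largest entry dominates the average, while the definition of $\vartheta_\Gamma$ forces $\vartheta_\Gamma\le\frac1n$. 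Suppose $\kappa_\Gamma\ge1$. The vector with $\kappa_\Gamma$ zeros followed by $n-\kappa_\Gamma$ ones lies in the open set $\Gamma$, so some $\mu^*=(-\alpha_1,\dots,-\alpha_{\kappa_\Gamma},\alpha_{\kappa_\Gamma+1},\dots,\alpha_n)\in\Gamma$ with all $\alpha_i>0$ exists (and, by symmetry, so does every coordinate permutation of it). Pairing $v$ against the permutation of $\mu^*$ that places $-\alpha_1,\dots,-\alpha_{\kappa_\Gamma}$ in positions $1,\dots,\kappa_\Gamma$ respectively and the positive entries in positions $\kappa_\Gamma+1,\dots,n$, and then using $v_j\le v_{\kappa_\Gamma+1}\le v_i$ for $j\ge\kappa_\Gamma+1$, $2\le i\le\kappa_\Gamma$, together with $v_1\ge\frac1n\sum_j v_j$, yields
\[
0\ \le\ -\sum_{i=1}^{\kappa_\Gamma}\alpha_i v_i+\sum_{j=\kappa_\Gamma+1}^{n}\alpha_j v_j\ \le\ v_{\kappa_\Gamma+1}\Big(\sum_{j>\kappa_\Gamma}\alpha_j-\sum_{i=2}^{\kappa_\Gamma}\alpha_i\Big)-\frac{\alpha_1}{n}\sum_j v_j .
\]
Choosing $\alpha_1=\cdots=\alpha_{\kappa_\Gamma}$ small and $\alpha_{\kappa_\Gamma+1}=\cdots=\alpha_n=1$ makes the parenthesised factor positive; dividing, and taking the supremum over all admissible $\mu^*$, gives exactly $v_{\kappa_\Gamma+1}\ge\vartheta_\Gamma\sum_j v_j$ with $\vartheta_\Gamma>0$. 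Combined with Step 1 this gives $f_i(\lambda)\ge f_{\kappa_\Gamma+1}(\lambda)\ge\vartheta_\Gamma\sum_j f_j(\lambda)>0$ for $1\le i\le\kappa_\Gamma+1$.

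\emph{Where the difficulty lies.} Step 1 is the conceptual heart, but it is short once one applies the concavity tangent inequality along rays. The substantive work is Step 2: one must select the right permutation of the near-boundary vectors $\mu^*$ — the distinguished ``$\alpha_1$'' slot paired against the top entry $v_1$, where the average bound is spent; the remaining negative slots against $v_2,\dots,v_{\kappa_\Gamma}$; all positive slots against the tail — and then verify that the resulting supremum is \emph{precisely} the stated $\vartheta_\Gamma$, in particular that $\vartheta_\Gamma>0$ and that the denominator $\sum_{j>\kappa_\Gamma}\alpha_j-\sum_{2\le i\le\kappa_\Gamma}\alpha_i$ is positive for the $\mu^*$ used (in fact for every admissible $\mu^*$, since any $\Gamma$ of the stated type is contained in $\Gamma_1=\{\sigma_1>0\}$). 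Two routine points remain: that the decreasing rearrangement of $Df(\lambda)$ is compatible with the increasing rearrangement of $\lambda$ when eigenvalues coincide — immediate from $f_i\ge f_j$ whenever $\lambda_i\le\lambda_j$ — and the degenerate case $\kappa_\Gamma=0$, which the main argument does not reach and which is treated separately above.
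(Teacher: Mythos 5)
Your argument is correct, and its two steps --- the dual-cone fact $Df(\lambda)\in\Gamma^*\setminus\{0\}$ obtained from concavity along rays plus \eqref{addistruc}, followed by pairing $Df(\lambda)$ against permuted near-boundary vectors $(-\alpha_1,\dots,-\alpha_{\kappa_\Gamma},\alpha_{\kappa_\Gamma+1},\dots,\alpha_n)\in\Gamma$ using $f_1\geqslant\cdots\geqslant f_n$ and $f_1\geqslant\frac1n\sum_j f_j$ --- are precisely the mechanism that the formula for $\vartheta_\Gamma$ encodes, i.e.\ essentially the argument of the cited source \cite{yuan-PUE1} (the present paper states Theorem \ref{yuan-k+1} without proof). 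Your side remarks also cover the only delicate points: positivity of the denominator for every admissible vector via $\Gamma\subset\{\lambda:\sigma_1(\lambda)>0\}$, and the degenerate case $\kappa_\Gamma=0$ handled separately.
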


Based on Theorem \ref{yuan-k+1}, the author confirmed the uniform ellipticity of  \eqref{mainequ-02-2-1}. Firstly by 
\eqref{tau-alpha},     	\begin{equation}		\label{assumption-4}  		\begin{aligned}	\varrho<\frac{1}{1-\kappa_\Gamma \vartheta_{\Gamma}} \mbox{ and } \varrho\neq 0.  \end{aligned}	\end{equation} 
From {\cite[Proposition 3.2]{yuan-PUE1}}, we know
  \begin{theorem}[{\cite{yuan-PUE1}}]
  	\label{thm-localestimates}
  	Suppose \eqref{tau-alpha}, \eqref{homogeneous-1-mu} and \eqref{homogeneous-1-buchong2} hold.
  	Then the equation \eqref{mainequ-02-2-1} is of uniform ellipticity at any   solution $u$ with 
  	$\lambda(g^{-1}V[u])\in\Gamma$.
  \end{theorem}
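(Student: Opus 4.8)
The plan is to compute the linearization of \eqref{mainequ-02-2-1} at a solution $u$, read off the eigenvalues of its principal coefficient matrix, and then trap those eigenvalues between two positive multiples of $\cF:=\sum_{i=1}^n f_i\big(\lambda(g^{-1}V[u])\big)$, with constants depending only on $n,\alpha,\tau$ and $\Gamma$. First I would fix such a $u$ and a point $x\in M$, and choose a local $g$-orthonormal frame at $x$ in which $g^{-1}V[u]$ is diagonal with eigenvalues $\lambda_1\leqslant\cdots\leqslant\lambda_n$. By \eqref{beta-gamma-A2}--\eqref{beta-gamma-A3} the only second-order dependence of $V[u]$ on $u$ is through $\Delta u\,g-\varrho\nabla^2u$, so the terms $\gamma|\nabla u|^2g$ and $\varrho\,du\otimes du$, as well as the factor $e^{2\varsigma u}$ on the right-hand side of \eqref{mainequ-02-2-1}, contribute only lower-order terms to the linearization. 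Consequently, in this frame the principal part of the operator linearized at $u$ is the diagonal matrix whose $k$-th entry is
\[
a_k:=\cF-\varrho\,f_k(\lambda),\qquad k=1,\dots,n,
\]
and uniform ellipticity of \eqref{mainequ-02-2-1} at $u$ amounts to the existence of constants $0<\theta\leqslant\Theta$, depending only on $n,\alpha,\tau,\Gamma$, with $\theta\,\cF\leqslant a_k\leqslant\Theta\,\cF$ for every $k$ and every $x$.

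Next I would verify the structural hypothesis \eqref{addistruc} behind Theorem \ref{yuan-k+1}: by \eqref{homogeneous-1-mu} and \eqref{homogeneous-1-buchong2} one has $\lim_{t\to+\infty}f(t\lambda)=\lim_{t\to+\infty}t^{\varsigma}f(\lambda)=+\infty>f(\mu)$ for all $\lambda,\mu\in\Gamma$. Theorem \ref{yuan-k+1} then yields $f_i(\lambda)\geqslant0$ for all $i$, and $f_i(\lambda)\geqslant\vartheta_\Gamma\cF$ for $1\leqslant i\leqslant\kappa_\Gamma+1$; since $f$ is symmetric and concave we also have $f_1\geqslant f_2\geqslant\cdots\geqslant f_n\geqslant0$, so $\max_kf_k=f_1$ and $0<f_k\leqslant\cF$. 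Summing $f_i\geqslant\vartheta_\Gamma\cF$ over $2\leqslant i\leqslant\kappa_\Gamma+1$ and discarding the nonnegative tail gives
\[
f_1=\cF-\sum_{i\geqslant2}f_i\leqslant\cF-\kappa_\Gamma\vartheta_\Gamma\cF=(1-\kappa_\Gamma\vartheta_\Gamma)\,\cF ,
\]
and in particular $0<1-\kappa_\Gamma\vartheta_\Gamma\leqslant1$.

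Now I would combine these with \eqref{assumption-4}, which \eqref{tau-alpha} guarantees: $\varrho<\frac1{1-\kappa_\Gamma\vartheta_\Gamma}$ and $\varrho\neq0$. The upper bound is immediate: $a_k=\cF-\varrho f_k\leqslant(1+|\varrho|)\cF$, using $0\leqslant f_k\leqslant\cF$. For the lower bound, if $\varrho<0$ then $a_k=\cF+|\varrho|f_k\geqslant\cF$, while if $0<\varrho<\frac1{1-\kappa_\Gamma\vartheta_\Gamma}$ then, using $f_k\leqslant f_1\leqslant(1-\kappa_\Gamma\vartheta_\Gamma)\cF$,
\[
a_k=\cF-\varrho f_k\;\geqslant\;\cF-\varrho f_1\;\geqslant\;\big(1-\varrho(1-\kappa_\Gamma\vartheta_\Gamma)\big)\cF\;>\;0 .
\]
Hence $a_k\geqslant\theta_0\cF$ with $\theta_0:=\min\{1,\,1-\varrho(1-\kappa_\Gamma\vartheta_\Gamma)\}>0$, so the eigenvalues of the linearized coefficient matrix satisfy $\theta_0\cF\leqslant a_k\leqslant(1+|\varrho|)\cF$ pointwise on $M$; equivalently, each of them is at least $\frac{\theta_0}{n(1+|\varrho|)}$ times their sum, with a constant depending only on $n,\alpha,\tau,\Gamma$. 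This is the claimed uniform ellipticity.

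I expect the only genuine point to be the lower bound in the range $\varrho>0$: it is exactly the inequality $\varrho(1-\kappa_\Gamma\vartheta_\Gamma)<1$ --- i.e.\ \eqref{assumption-4}, which is what \eqref{tau-alpha} was designed to produce --- that keeps $a_k$ bounded below by a fixed multiple of $\cF$, preventing the equation from degenerating (for instance as $\lambda$ approaches $\partial\Gamma$, where $f_1/\cF$ may tend to its extremal value $1-\kappa_\Gamma\vartheta_\Gamma$). The sharp comparison $f_1\leqslant(1-\kappa_\Gamma\vartheta_\Gamma)\cF$ supplied by Theorem \ref{yuan-k+1} is precisely what lets the mild hypothesis \eqref{tau-alpha} replace the more restrictive \eqref{tau-alpha-2}; all the remaining steps are elementary.
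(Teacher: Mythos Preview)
The paper does not actually prove Theorem \ref{thm-localestimates}; it is quoted from the prequel \cite[Proposition 3.2]{yuan-PUE1}, so there is no in-paper proof to compare against. Your argument is correct and is essentially the intended one: the structural result Theorem \ref{yuan-k+1} (also imported from \cite{yuan-PUE1}) is precisely the tool needed, and your key step---deriving $f_1\leqslant(1-\kappa_\Gamma\vartheta_\Gamma)\sum_j f_j$ from $f_i\geqslant\vartheta_\Gamma\sum_j f_j$ for $2\leqslant i\leqslant\kappa_\Gamma+1$, then combining with \eqref{assumption-4} to bound $a_k=\cF-\varrho f_k$ from below---is exactly how the condition \eqref{tau-alpha} on $(\alpha,\tau)$ translates into uniform ellipticity of the linearized operator.
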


The following lemma plays a key role in the construction of admissible metrics.
  
    \begin{lemma}[{\cite{yuan-PUE1}}]
  	\label{coro3-ingamma}
  	For $\varrho$ satisfying \eqref{assumption-4},  $(1,\cdots,1,1-\varrho)\in \Gamma.$
  \end{lemma}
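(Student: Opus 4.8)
The plan is to read the membership $(1,\dots,1,1-\varrho)\in\Gamma$ off the variational definition of $\vartheta_\Gamma$, by manufacturing one explicit point of $\overline{\Gamma}$ out of a maximizing sequence and then invoking convexity of $\Gamma$. Write $\kappa:=\kappa_\Gamma$ and $\vartheta:=\vartheta_\Gamma$; I will use that $\kappa\in\{0,\dots,n-1\}$ and $0\le\vartheta\le\tfrac{1}{n}$. The bound $\vartheta\le\tfrac{1}{n}$ holds because $\sigma_1(\mu):=\sum_i\mu_i>0$ for every $\mu\in\Gamma$ (a standard consequence of symmetry, convexity and $\Gamma_n\subset\Gamma\neq\mathbb{R}^n$), so that in the defining ratio the denominator $\sum_{i>\kappa}\alpha_i-\sum_{2\le i\le\kappa}\alpha_i=\sigma_1(\mu)+\alpha_1$ strictly exceeds $\alpha_1$. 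In particular $0\le\kappa\vartheta<1$, so $\tfrac{1}{1-\kappa\vartheta}\in[1,n]$. If $\varrho<1$ — which is automatic when $\kappa\vartheta=0$, since then $\tfrac{1}{1-\kappa\vartheta}=1$ — then $1-\varrho>0$, the point has all positive coordinates and lies in $\Gamma_n\subset\Gamma$; so I may assume $\varrho\ge1$, and then $1\le\varrho<\tfrac{1}{1-\kappa\vartheta}$ forces $\kappa\vartheta>0$, hence $\kappa\ge1$ and $\vartheta>0$.

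The crux is to show that $w:=(-n(n-1)\vartheta,\,1,\dots,1)$ lies in $\overline{\Gamma}$. By the definition of $\vartheta$ as a supremum (the set over which it is taken being nonempty since $\kappa\ge1$, by perturbing $(0,\dots,0,1,\dots,1)\in\Gamma$), choose
\[
\mu^{(m)}=\bigl(-\alpha_1^{(m)},\dots,-\alpha_\kappa^{(m)},\alpha_{\kappa+1}^{(m)},\dots,\alpha_n^{(m)}\bigr)\in\Gamma,\qquad \alpha_i^{(m)}>0,
\]
with $\alpha_1^{(m)}/\bigl(nD(\mu^{(m)})\bigr)\to\vartheta$, where $D(\mu):=\sum_{i>\kappa}\alpha_i-\sum_{2\le i\le\kappa}\alpha_i$. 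As this ratio tends to $\vartheta>0$, we have $D(\mu^{(m)})>0$ for large $m$, and since $\Gamma$ and the ratio are invariant under $\mu\mapsto c\mu$ $(c>0)$ I may rescale so that $D(\mu^{(m)})=1$, whence $\alpha_1^{(m)}\to n\vartheta$. Averaging $\mu^{(m)}$ over the $(n-1)!$ permutations of its last $n-1$ coordinates produces, by symmetry and convexity of $\Gamma$, the point
\[
\nu^{(m)}=\Bigl(-\alpha_1^{(m)},\ \tfrac{1}{n-1},\dots,\tfrac{1}{n-1}\Bigr)\in\Gamma,
\]
since those $n-1$ coordinates average to $\tfrac{1}{n-1}\sum_{i=2}^n\mu_i^{(m)}=\tfrac{D(\mu^{(m)})}{n-1}=\tfrac{1}{n-1}$. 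Letting $m\to\infty$ and using that $\overline{\Gamma}$ is a closed cone, $w=(n-1)\lim_m\nu^{(m)}\in\overline{\Gamma}$.

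With $w$ in hand I combine it with the interior point $\mathbf 1=(1,\dots,1)\in\Gamma_n\subset\Gamma$. The half-open segment from $w$ to $\mathbf 1$ lies in $\Gamma$ (an interior point together with a boundary point of the convex set $\overline{\Gamma}$); along it the last $n-1$ coordinates stay equal to $1$ while the first sweeps $(-n(n-1)\vartheta,\,1]$, and adjoining $\Gamma_n$ (first coordinate $>1$) shows $(t,1,\dots,1)\in\Gamma$ for every $t>-n(n-1)\vartheta$. Thus it suffices to check $1-\varrho>-n(n-1)\vartheta$, i.e.\ $\varrho<1+n(n-1)\vartheta$; this follows from $\varrho<\tfrac{1}{1-\kappa\vartheta}$ once one knows $\tfrac{1}{1-\kappa\vartheta}\le 1+n(n-1)\vartheta$, and clearing the positive denominator and dividing by $\vartheta>0$ the latter becomes $\kappa\bigl(1+n(n-1)\vartheta\bigr)\le n(n-1)$, which holds since $\kappa\le n-1$ and $n(n-1)\vartheta\le n-1$. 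Permuting coordinates back gives $(1,\dots,1,1-\varrho)\in\Gamma$.

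The step I expect to be the real obstacle is passing from the abstract supremum defining $\vartheta_\Gamma$ to a genuine element of $\overline{\Gamma}$: the symmetrization over the last $n-1$ slots is the device that makes it work, because it collapses the uncontrolled part of $\mu^{(m)}$ to the single normalized scalar $D(\mu^{(m)})=1$ and reduces everything to a limit in a closed cone, after which only convexity and the elementary inequality above are needed. (Alternatively one could dualize: $(1,\dots,1,1-\varrho)\in\overline{\Gamma}$ is equivalent to $\max_i\xi_i\le(1-\kappa\vartheta)\sum_i\xi_i$ for all $\xi$ in the dual cone $\Gamma^{*}$, but that would require a companion description of $\vartheta_\Gamma$ on the dual side, so the primal route above seems cleaner.)
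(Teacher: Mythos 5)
Your argument is correct, and it is worth noting that this paper does not actually prove the lemma --- it is quoted from the prequel \cite{yuan-PUE1} --- so what you have produced is a self-contained derivation directly from the definitions of $\kappa_\Gamma$ and $\vartheta_\Gamma$. I checked the steps that carry the load: (i) $\Gamma\subset\{\sigma_1>0\}$ gives the denominator $\sigma_1(\mu)+\alpha_1>\alpha_1$, hence $\vartheta_\Gamma\leqslant 1/n$ and $\kappa_\Gamma\vartheta_\Gamma\leqslant (n-1)/n<1$, so \eqref{assumption-4} is meaningful and the case $\varrho<1$ (including $\Gamma=\Gamma_n$ and $\varrho<0$) is trivial; (ii) the symmetrization of a normalized extremal sequence ($D(\mu^{(m)})=1$) over the last $n-1$ slots is legitimate by symmetry plus convexity of $\Gamma$, and correctly yields $(-n\vartheta_\Gamma,\tfrac{1}{n-1},\dots,\tfrac{1}{n-1})\in\overline{\Gamma}$, hence $(-n(n-1)\vartheta_\Gamma,1,\dots,1)\in\overline{\Gamma}$ after scaling; (iii) the segment from this point of $\overline{\Gamma}$ to the interior point $(1,\dots,1)$ lies in $\Gamma$, so $(t,1,\dots,1)\in\Gamma$ for all $t>-n(n-1)\vartheta_\Gamma$; and (iv) the elementary inequality $\bigl(1+n(n-1)\vartheta\bigr)(1-\kappa\vartheta)\geqslant 1$, equivalent to $\kappa\bigl(1+n(n-1)\vartheta\bigr)\leqslant n(n-1)$, does follow from $\kappa\leqslant n-1$ and $\vartheta\leqslant 1/n$, with equality only in the extreme case $\kappa=n-1$, $\vartheta=1/n$, which still suffices because the hypothesis $\varrho<\tfrac{1}{1-\kappa_\Gamma\vartheta_\Gamma}$ is strict. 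So although your route passes through the possibly non-sharp threshold $1+n(n-1)\vartheta_\Gamma$ rather than pinning down the exact supremum of admissible $\varrho$, it fully covers the range \eqref{assumption-4} and proves the lemma; the one presentational improvement I would suggest is to state explicitly that $\Gamma\neq\Gamma_n$ (equivalently $\kappa_\Gamma\geqslant1$) in the nontrivial case, so that the supremum formula for $\vartheta_\Gamma$ is the one in force.
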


    \medskip
  \section{Construction of admissible metrics and   proof of main results}
  \label{sec16}

  In this section, we construct admissible metric  in $[g]$ under the assumptions \eqref{assump1-metric} and \eqref{assump2-metric}.

\begin{theorem}
	\label{thm1-construction}
	Let $(\alpha,\tau)$ satisfy \eqref{tau-alpha}.
	Suppose $M$ is a closed connected manifold of dimension $n\geqslant 3$ and suppose a Riemannian metric $g$  satisfying \eqref{assump1-metric} and \eqref{assump2-metric}.  Then there exists an admissible metric in $[g]$.
\end{theorem}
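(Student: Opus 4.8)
My plan is to realize the admissible metric as $\tilde g=e^{2u}g$ with $u=F\circ h$, where $h\colon M\to\mathbb{R}$ is a Morse function with carefully arranged critical data and $F$ is a one--variable profile function, and to read off admissibility directly from the decomposition of $V[u]$ in \eqref{beta-gamma-A2}. First I would record the reduction: under \eqref{tau-alpha} the constant $\frac{n-2}{\alpha(\tau-1)}$ is positive, so \eqref{assump1-metric}--\eqref{assump2-metric} say exactly that $\lambda(g^{-1}A)\in\overline{\Gamma}$ on $M$ with $\lambda(g^{-1}A)(p_0)\in\Gamma$, where $A=V[0]$, and $\tilde g=e^{2u}g$ is admissible if and only if $\lambda(g^{-1}V[u])\in\Gamma$ on $M$. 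Thus the task is to perturb a weakly admissible configuration into the interior of $\Gamma$. For $u=F(h)$,
\[
V[F(h)]=F''\big(|\nabla h|^2 g-\varrho\,dh\otimes dh\big)+F'\big(\Delta h\,g-\varrho\nabla^2 h\big)+(F')^{2}\big(\gamma|\nabla h|^2 g+\varrho\,dh\otimes dh\big)+A .
\]
Where $\nabla h\neq0$, the eigenvalues of the first tensor form the vector $F''|\nabla h|^2(1,\dots,1,1-\varrho)$, the entry $1-\varrho$ occurring in the $\nabla h$--direction, and $(1,\dots,1,1-\varrho)\in\Gamma$ by Lemma \ref{coro3-ingamma}. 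Since $\Gamma$ is an open cone, this is the engine of the construction: on the regular part of $h$, taking $F''$ large — and, using a convex $F$ with $F''\asymp(F')^2$ together with $1+\gamma>0$ (a consequence of \eqref{tau-alpha}), so that the first and third tensors combine to a positive multiple of $(1,\dots,1)$ — makes $\lambda(g^{-1}V[F(h)])$ land deep inside $\Gamma$, dominating the bounded tensor $A$.

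The difficulty is concentrated at the critical points of $h$, where the first and third tensors vanish and $V[F(h)]=F'\,(\Delta h\,g-\varrho\nabla^2 h)+A$. Choosing $h$ with round metric--Hessian $\pm\mathrm{Id}$ at each critical point, the eigenvalue vector of $\Delta h\,g-\varrho\nabla^2 h$ at a point of Morse index $\lambda$ is $(\,n-2\lambda-\varrho,\dots,n-2\lambda-\varrho,\,n-2\lambda+\varrho,\dots\,)$; for $\lambda$ below a threshold — in particular the minimum, where it is $(n-\varrho)(1,\dots,1)$ with $\varrho<n$ by \eqref{assumption-4} — this lies in $\Gamma$, so with $F'>0$ one gets $V[F(h)]\in\Gamma+\overline{\Gamma}\subseteq\Gamma$ for free. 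The genuinely bad critical points are those of high index, including the maximum, where this vector is not in $\overline{\Gamma}$. Here differential topology enters: the positions of critical points are unconstrained, so after arranging a Morse function with a single maximum I would move, by an ambient diffeomorphism isotopic to the identity, all bad critical points into the open set $G=\{p:\lambda(g^{-1}A)(p)\in\Gamma\}$ (nonempty since $p_0\in G$), reparametrize $h$ by an increasing real function so that their critical values cluster just below $\max_M h$, and there take $F'$ (hence $F''$) small, so that $V[F(h)]\approx A\in\Gamma$. The region $\{h>c\}$ over which $F$ is flattened is then a neighbourhood of the maximum with handles of low index (dual to the high Morse indices) attached, which for $n\geq3$ can be arranged to lie inside $G$; the minimum and the remaining critical points are left in the steep range, where either the first--tensor mechanism or the $\Gamma$--positivity of $\Delta h\,g-\varrho\nabla^2 h$ does the job.

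The step I expect to be the main obstacle is making all these pointwise inequalities hold simultaneously: ``steep enough on the bulk to dominate $A$'' and ``flat enough near the bad critical points so that $A\in\Gamma$ dominates'' pull $F'$ in opposite directions, and reconciling them forces one to exploit the uniqueness and roundness of the maximum, the clustering of the bad critical values near the top (so that $|\nabla h|^2$ is uniformly small there while $F$ flattens), a careful interpolation of $F$ across the intermediate level sets, and possibly a preliminary conformal change making $\lambda(g^{-1}A)$ deep inside $\Gamma$ near $p_0$ before the main argument. Granting this, $u=F(h)$ yields $\lambda(g^{-1}V[u])\in\Gamma$ on all of $M$, i.e. $\tilde g=e^{2u}g\in[g]$ is an admissible metric.
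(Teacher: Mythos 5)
Your framework---a conformal factor $u=F\circ h$ built from a Morse function, the decomposition of $V[u]$ from \eqref{beta-gamma-A2}, Lemma \ref{coro3-ingamma} as the engine on the regular set, and an ambient isotopy relocating critical points into the region $G=\{\lambda(g^{-1}A)\in\Gamma\}$---is exactly the skeleton of the paper's construction. But there is a genuine gap, and it is the one you flag yourself: the interpolation between the ``steep'' regime and the ``flat'' regime is never carried out, and as you have structured it it would fail. To flatten $F$ near the top you must let $F'$ decrease through a transition region, so $F''<0$ there; then the first tensor $F''\left(|\nabla h|^2 g-\varrho\, dh\otimes dh\right)$ points into $-\Gamma$ and your engine works against you, while the strict interior position of $A$ is only available if that entire transition region lies in $G$. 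Moving the high-index \emph{critical points} into $G$ does not localize the superlevel set $\{h>c\}$, nor the collar around $\{h=c\}$ where the flattening happens, inside $G$ (a small ball around $p_0$), so the assertion that ``$\{h>c\}$ can be arranged to lie inside $G$'' is unsubstantiated. The index-threshold analysis at the remaining critical points (round Hessians, a single maximum, clustering of critical values) is additional structure that must all be arranged simultaneously and is not needed.

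The paper dissolves the tension you describe rather than reconciling it. Since $A\in\overline{\Gamma}$ everywhere and $\Gamma+\overline{\Gamma}\subset\Gamma$, the correction never needs to \emph{dominate} $A$ on the bulk; it only needs to lie in $\Gamma$, i.e.\ the $F''$-gradient term must dominate the $F'$-Hessian term. So steepness is needed only in the ratio $F''/F'$, not in absolute size. The paper moves \emph{all} critical points (not just the bad ones) by the homogeneity lemma into a ball $\overline{B_{r_0/2}(p_0)}$ on which $\lambda(g^{-1}A)\in\Gamma$, normalizes the Morse function $v\leqslant -1$, and takes $\underline{u}=e^{Nv}$, so that $F''/F'=N$ is large while $F'\leqslant Ne^{-N}$ is uniformly tiny. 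Outside the ball, $|\nabla v|^2\geqslant m_0>0$ and the gradient term, which lies in $\Gamma$ by Lemma \ref{coro3-ingamma} together with the openness of $\Gamma$, beats the Hessian term, which carries an extra factor $1/N$; inside the ball, the Hessian term is $O(Ne^{-N})$ and is absorbed by the openness of $\Gamma$ around $A$, the gradient term being harmlessly in $\overline{\Gamma}$. No Morse indices, no flattening, and no interpolation region are needed, and $F$ stays globally convex. Your algebraic observations (the eigenvalue computation, $1+\gamma>0$, $(1,\dots,1,1-\varrho)\in\Gamma$) are correct, but without the above simplification the proof as proposed is incomplete at its central step.
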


\begin{proof}
	
	We use the notation denoted in \eqref{beta-gamma-A2} and \eqref{beta-gamma-A3}.   	For a $C^2$-smooth  function $w$ on $M$, we 
	denote the critical set by
	\begin{equation}
		\mathcal{C}(w)=\left\{x\in  M: dw(x)=0\right\}. \nonumber
	\end{equation}

	By the assumption \eqref{assump2-metric} and the openness of $\Gamma$, there is a uniform positive constant $r_0$
	such that  
	    	\begin{equation}
	    		\label{key1}
		\begin{aligned}
			\lambda(g^{-1}A)\in \Gamma \mbox{ in } \overline{B_{r_0}(p_0)}.
		\end{aligned}
	\end{equation}
	
Take a smooth Morse function $w$ with
  the critical set  
   $$\mathcal{C}(w)=\{p_1,\cdots, p_m, p_{m+1}\cdots p_{m+k}\}$$
among which $p_1,\cdots, p_m$ are all the critical points  being in $M\setminus \overline{B_{r_0/2}(p_0)}$. 
Pick $q_1, \cdots, q_m\in {B_{r_0/2}(p_0)}$ but not the critical point of $w$. By the homogeneity lemma (see e.g. \cite{Milnor-1997}), one can find a diffeomorphism
$h: M\to M$, which is smoothly isotopic to the identity, such that 
\begin{itemize}
	\item $h(p_i)=q_i$, $1\leqslant i\leqslant m$.
	\item $h(p_i)=p_i$, $m+1\leqslant i\leqslant m+k$.
\end{itemize}
Then we obtain a   Morse function 
  \begin{equation}
  	\label{Morse1-construction}
	\begin{aligned} 
		v=w\circ h^{-1}.
	\end{aligned}
\end{equation} 
One can check that 
  \begin{equation}
  		\label{key2}
	\begin{aligned}
		\mathcal{C}(v)=\{q_1,\cdots, q_m, p_{m+1}\cdots p_{m+k}\}\subset \overline{B_{r_0/2}(p_0)}.
	\end{aligned}
\end{equation}

Next we complete the proof. Assume $v\leqslant -1$.   	Take $\underline{u}=e^{Nv},$ $\underline{g}=e^{2\underline{u}}g,$
then \begin{equation}
	\begin{aligned}
		V[\underline{u}]= A+
		N^2e^{Nv}\left((\Delta v g-\varrho\nabla^2v)/N+(1+\gamma e^{Nv})|\nabla v|^2 g+\varrho (e^{Nv}-1)dv\otimes dv\right). \nonumber
	\end{aligned}
\end{equation}

Notice that  
    	\begin{equation}
    		\label{key3}
	\begin{aligned}
	\,&\lambda(g^{-1} ((1+\gamma e^{Nv})|\nabla v|^2 g+\varrho (e^{Nv}-1)dv\otimes dv ))\\
	=\,& |\nabla v|^2 (1, \cdots, 1, 1-\varrho)+  e^{Nv} |\nabla v|^2 (\gamma,\cdots,\gamma,\gamma+\varrho).
	\end{aligned}
\end{equation} 

By Lemma \ref{coro3-ingamma} and the openness of $\Gamma$,  
    	\begin{equation}
    		\label{key4}
	\begin{aligned}
		(1, \cdots, 1, 1-\varrho)+e^{Nv}(\gamma,\cdots,\gamma,\gamma+\varrho)\in \Gamma  \mbox{ for  } N\gg1.
	\end{aligned}
\end{equation}

\noindent{\bf Case 1}: $x\in \overline{B_{r_0}(p_0)}$. By \eqref{key1} and the openness of $\Gamma$, 
\begin{equation}
	\begin{aligned}
		\lambda(g^{-1}(A+
		N e^{Nv} (\Delta v g-\varrho\nabla^2v)) )\in \Gamma.
	\end{aligned}
\end{equation}
Combining  \eqref{key3} and \eqref{key4}, 
\begin{equation}
	\begin{aligned}
		\lambda(g^{-1}V[\underline{u}])\in\Gamma \mbox{ in } \overline{B_{r_0}(p_0)}.  \nonumber
	\end{aligned}
\end{equation}

\noindent{\bf Case 2}:  $x\notin \overline{B_{r_0}(p_0)}$. By \eqref{key2} there is a uniform positive constant $m_0$ such that $|\nabla v|^2\geqslant m_0$ in $M\setminus\overline{B_{r_0}(p_0)}$. By \eqref{key3}, \eqref{key4}, \eqref{assump1-metric}, and the openness of $\Gamma$, 
\begin{equation}
	\begin{aligned}
		\lambda(g^{-1}V[\underline{u}])\in\Gamma \mbox{ in } M\setminus\overline{B_{r_0}(p_0)}.  \nonumber
	\end{aligned}
\end{equation}

This completes the proof.
\end{proof}

Using the admissible metric constructed 
above, together with Theorem \ref{thm-localestimates}, we immediately derive the $C^0$-estimate using maximum principle. 

Building on Theorem \ref{thm-localestimates}, the author derived interior estimates for \eqref{mainequ-02-2-1}. 
\begin{theorem}[\cite{yuan-PUE1}]
	\label{interior-2nd-2} 
	Let $B_r\subset M$ be a geodesic ball of radius $r$.
	Let $u\in C^4(B_{r})$ be a  solution with $\lambda(g^{-1}V[u])\in\Gamma$ to the equation  \eqref{mainequ-02-2-1}  in $B_r$.
	Assume  \eqref{tau-alpha}, \eqref{homogeneous-1-mu} and \eqref{homogeneous-1-buchong2} hold.
	Then  
	\begin{equation}
		\begin{aligned}
			\sup_{B_{{r}/{2}}}\, (|\nabla^2 u|+|\nabla u|^2) \leqslant C, \nonumber 
		\end{aligned}
	\end{equation}
	where $C$ depends on  $|u|_{C^0(B_r)}$, $r^{-1}$, 
	and other known data.
\end{theorem}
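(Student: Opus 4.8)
The approach will be the classical two‑step strategy for a priori $C^2$ bounds for fully nonlinear elliptic equations — an interior gradient estimate of Bernstein type followed by an interior Hessian estimate of Pogorelov type — carried out for the operator $V[u]$. Write \eqref{mainequ-02-2-1} as $F(\nabla^2u,\nabla u,u,x)=0$, where $F$ is the composition of $f$ with the map $\nabla^2u\mapsto\lambda(g^{-1}V[u])$ minus the right‑hand side. Since $V[u]$ depends on the Hessian only through the affine tensor $\Delta u\,g-\varrho\nabla^2u$ and $f$ is concave and symmetric, $F$ is concave in the Hessian slot; and by Theorem~\ref{thm-localestimates} the equation is uniformly elliptic along $u$, so the linearized coefficients $F^{ij}=\partial F/\partial u_{ij}$ satisfy $\vartheta\,g^{ij}\leqslant F^{ij}\leqslant\vartheta^{-1}g^{ij}$ for some $\vartheta>0$ controlled by $|u|_{C^0(B_r)}$, $\sup_{B_r}|\nabla u|$, and known data. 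These two properties — concavity and uniform ellipticity — are the engine of the whole argument.

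First I would establish the local gradient bound $\sup_{B_{3r/4}}|\nabla u|\leqslant C$. Testing $\eta^2|\nabla u|^2e^{-Bu}$ (with $\eta$ a cutoff as below and $B$ large) and using that $V[u]$ is quadratic in $\nabla u$, one differentiates the equation once, keeps the good term $2F^{ij}u_{ki}u_{kj}$ inside $F^{ij}(|\nabla u|^2)_{ij}$ by positivity of $F^{ij}$, and absorbs the remaining terms by choosing $B$ large; this is routine once uniform ellipticity is available. With the gradient controlled, $\vartheta$ becomes an honest constant, and $\lambda(g^{-1}V[u])$ and $\nabla^2u$ differ only by quantities of known size.

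Next, the Hessian upper bound. Fix a cutoff $\eta\in C_c^\infty(B_{3r/4})$ with $\eta\equiv1$ on $B_{r/2}$, $0\leqslant\eta\leqslant1$, $|\nabla\eta|^2\leqslant Cr^{-2}\eta$, $|\nabla^2\eta|\leqslant Cr^{-2}$, and test $W(x,\xi)=\eta(x)^2\,\nabla^2u(x)(\xi,\xi)\,e^{\phi(|\nabla u(x)|^2)}$ over the unit tangent bundle of $B_{3r/4}$, with $\phi(s)=as$ and $a$ large (now admissible since $|\nabla u|$ is bounded). Let the maximum be attained at $(x_0,\xi_0)$, take geodesic normal coordinates diagonalizing $\nabla^2u(x_0)$ with eigenvalues $\lambda_1\geqslant\cdots\geqslant\lambda_n$ and $\xi_0=e_1$, and suppose $\eta(x_0)^2\lambda_1(x_0)$ is large. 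From $\nabla\log W=0$ and $F^{ij}\partial_{ij}\log W\leqslant0$ at $x_0$, after differentiating $F=0$ once and twice along $e_1$, commuting third derivatives by the Ricci identity, and discarding $F^{ij,kl}u_{ij,1}u_{kl,1}\leqslant0$ by concavity, one reaches an inequality among $\lambda_1^{-1}F^{ii}u_{11,ii}$, $-\lambda_1^{-2}F^{ii}u_{11,i}^2$, $a\,F^{ii}(|\nabla u|^2)_{ii}=2a\,F^{ii}u_{ki}u_{ki}+2a\,F^{ii}u_ku_{kii}$, $a^2F^{ii}((|\nabla u|^2)_i)^2$, the cutoff terms $O(r^{-2}\sum_iF^{ii})$, and a group of \emph{gradient‑coupling error terms} generated by the $\gamma|\nabla u|^2g+\varrho\,du\otimes du$ part of $V[u]$, of the shape $F^{ii}u_ku_{ki,1}$, $F^{ii}u_iu_{11,i}$, $(\lambda_1^2+\lambda_1)\sum_iF^{ii}$. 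Substituting the first‑order identity $u_{11,i}/\lambda_1=-2\eta_i/\eta-a(|\nabla u|^2)_i$ turns the third‑order remainders into expressions in $\eta,\nabla\eta,\nabla^2u,\nabla u$, which — together with the good terms $a^2F^{ii}((|\nabla u|^2)_i)^2$ and $\lambda_1^{-1}a\,F^{ii}u_{ki}u_{ki}$ — are absorbed by taking $a$ large relative to $\vartheta$. The outcome is $\eta(x_0)^2\lambda_1(x_0)\leqslant C$, hence $\sup_{B_{r/2}}\lambda_{\max}(\nabla^2u)\leqslant C$. \emph{I expect this absorption to be the real difficulty}: it is precisely where the genuine gradient dependence of $V[u]$ — which separates \eqref{mainequ-02-2-1} from the model $F(\nabla^2u)=\psi$ — must be tamed, and when $\Gamma\neq\Gamma_n$ one should additionally exploit the refined positivity $f_i\geqslant\vartheta_\Gamma\sum_jf_j$ for $i\leqslant\kappa_\Gamma+1$ from Theorem~\ref{yuan-k+1} to keep the comparison between $\sum_iF^{ii}$ and $F^{ii}\lambda_i^2$ effective across all eigenvalue directions.

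Finally, the lower Hessian bound. Since $\Gamma\supseteq\Gamma_n$ forces $\Gamma\subseteq\{\lambda:\sum_i\lambda_i>0\}$, we have $\tr(g^{-1}V[u])>0$, that is, $(n-\varrho)\Delta u$ plus terms of known size is positive; combined with the upper bound on $\lambda_{\max}(\nabla^2u)$, the controlled gradient, the homogeneity \eqref{homogeneous-1-mu} (which prevents all eigenvalues of $g^{-1}V[u]$ from tending to $+\infty$ while $f=\psi\,e^{2\varsigma u}$ stays bounded) and $f=0$ on $\partial\Gamma$, this pins $\Delta u$ — hence every eigenvalue of $\nabla^2u$ — to a controlled range, so $\sup_{B_{r/2}}|\nabla^2u|\leqslant C$. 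Together with the gradient bound this gives $\sup_{B_{r/2}}(|\nabla^2u|+|\nabla u|^2)\leqslant C$ with $C$ depending only on $|u|_{C^0(B_r)}$, $r^{-1}$ and known data, as asserted.
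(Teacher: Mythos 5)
You are proving a statement that this paper does not actually prove: Theorem \ref{interior-2nd-2} is quoted from \cite{yuan-PUE1}, so your outline can only be judged on its own merits. Its skeleton (concavity of the operator, the uniform ellipticity of Theorem \ref{thm-localestimates}, a Bernstein-type gradient bound, a Pogorelov-type bound for the largest eigenvalue, then the trace inequality $\Gamma\subset\Gamma_1$ to get the lower Hessian bound) is the expected route, and the last step is essentially right (note you implicitly need $n-\varrho>0$, which does hold because $\kappa_\Gamma\leqslant n-1$ and $\vartheta_\Gamma\leqslant 1/n$ give $\varrho<n$ under \eqref{assumption-4}). But there is a genuine gap at the analytic core. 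Theorem \ref{thm-localestimates} does \emph{not} give your claimed bound $\vartheta g^{ij}\leqslant F^{ij}\leqslant\vartheta^{-1}g^{ij}$ with $\vartheta$ controlled by $|u|_{C^0(B_r)}$, $\sup_{B_r}|\nabla u|$ and known data. In a frame diagonalizing $g^{-1}V[u]$ the linearized coefficients are $\sum_k f_k-\varrho f_i$, and what Theorem \ref{yuan-k+1} together with \eqref{assumption-4} yields is that each of these is trapped between structural multiples of $\sum_k f_k$; the size of $\sum_k f_k$ itself is not controlled by $|u|_{C^0}$ and $|\nabla u|$ (take $f=\sigma_n^{1/n}$: the $F^{ij}$ degenerate or blow up as the eigenvalues of $g^{-1}V[u]$ spread, which is only ruled out \emph{after} the Hessian bound). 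Consequently all your absorptions ``by taking $a$ (or $B$) large relative to $\vartheta$'' have to be reorganized so that good and bad terms carry the common factor $\sum_k f_k$, and the terms produced by differentiating the right-hand side $\psi e^{2\varsigma u}$ in \eqref{mainequ-02-2-1}, which carry no such factor, require a positive lower bound for $\sum_k f_k$ (obtainable from concavity, \eqref{homogeneous-1-mu}--\eqref{homogeneous-1-buchong2} and the $C^0$ bound) that you never state. As written there is also a circularity: you let $\vartheta$ depend on $\sup_{B_r}|\nabla u|$ and then invoke that ellipticity to prove the gradient bound.

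The second gap is the gradient estimate itself, which you dismiss as ``routine.'' For these conformal equations it is the delicate step: the terms $\gamma|\nabla u|^2g+\varrho\,du\otimes du$, with $\gamma,\varrho$ of either sign depending on $(\tau,\alpha)$, produce, after one differentiation, terms of size $\bigl(\sum_k f_k\bigr)|\nabla u|^3|\nabla^2u|$ that cannot be absorbed by mere positivity of $F^{ij}$ and a factor $e^{-Bu}$; one must exploit the first-order conditions at the maximum point and the specific structure of $V[u]$ (this is exactly why interior gradient estimates for $\sigma_k$-Schouten-type equations occupy whole sections in Guan--Wang, Gursky--Viaclovsky, Sheng--Trudinger--Wang). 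Since the theorem's constant is allowed to depend only on $|u|_{C^0(B_r)}$, $r^{-1}$ and known data, this step carries the main weight, and without it the Pogorelov step (whose test function already presupposes a bounded gradient to choose $\phi(s)=as$) does not launch. So the proposal is a reasonable road map, but the two estimates that constitute the actual content of Theorem \ref{interior-2nd-2} are asserted rather than proved, and one of the asserted tools (absolute two-sided ellipticity bounds) is false in the stated generality.
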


In conclusion, we obtain
    	\begin{equation}
  	\begin{aligned}
  		|\nabla^2 u|\leqslant C. \nonumber
  	\end{aligned}
  \end{equation}
Then we can prove Theorem \ref{thm2-metric} via a standard continuity method. 

  \bigskip
  

\subsubsection*{Acknowledgements} 
The author would like to thank Professor Yi Liu for helpful discussions on the Morse theory.
 The author is supported by  the National Natural Science Foundation of China grant 11801587.

\bigskip


\end{document}